%% file: B-Chaumont.tex
\documentclass[final,12pt]{amsart}

\usepackage[pages=all, color=black, position={current page.south}, placement=bottom, scale=1, opacity=1, vshift=5mm]{background}

\usepackage[margin=1in]{geometry} 

\usepackage{amssymb}
\usepackage{amsmath}
\usepackage{amsmath}
\usepackage{amsthm}
\usepackage{amssymb}
\usepackage{float}
\usepackage{multirow}
\usepackage{tikz}
\usepackage{enumerate}
\usepackage[caption=false]{subfig}
\usepackage[utf8]{inputenc}
\usepackage{hyperref}
\usepackage{algorithm}
\usepackage[export]{adjustbox} 

\usepackage{oubraces}
\usepackage{stix}
\usepackage{stmaryrd}
\theoremstyle{plain}
\newtheorem{theorem}{Theorem}[section]

\newtheorem{lemma}[theorem]{Lemma}

\theoremstyle{definition}

\newtheorem{Remark}[theorem]{Remark}

\newcommand\bR{{\mathbb{R}}}

\usepackage{graphicx, color}

\input{general_commands}

\newcommand{\ts}{\widetilde s}

\usepackage{algorithm, algpseudocode} 
\usepackage{mathrsfs} 

\usepackage{lipsum}

\author{
  Gabriel R.  Barrenechea
  }
  \address{
  Gabriel R.  Barrenechea
  \thanks{
    Department of Mathematics and Statistics,
    University of Strathclyde, 26 Richmond Street, Glasgow G1 1XK, UK,
    {\tt{gabriel.barrenechea@strath.ac.uk}}.
}
}

\author{
  Th\'eophile Chaumont-Frelet
}
\address{
  Th\'eophile Chaumont-Frelet
  \thanks{
    Inria Univ.  Lille and Laboratoire Paul Painlev\'e, 59655 Villeneuve-d’Ascq, France,
    {\tt{theophile.chaumont@inria.fr}}.
}}

\title[Nitche's trick for positivity preservation]{An Aubin-Nitsche Lemma for a positivity-preserving finite element method}

\date{\today}

\begin{document}

\maketitle

\begin{abstract}
  In this work we prove an Aubin-Nitsche Lemma for a positivity-preserving discretisation of an elliptic
  problem.  Due to the nonlinearity of the discretisation, the result requires as a first step
  the proposal of a linearised adjoint problem that can be linked to the method of
  choice by appropriately selecting weights.  This linearised adjoint problem, together 
  with a regularity result, allow the proof of an optimal-order error estimate in the $L^2$-norm of
  the error.
\end{abstract}

\noindent\underline{Keywords :} Positivity-preservation; $L^2$-error estimates; Aubin-Nitsche Lemma.

\section{Introduction}
\label{sec:intro}

The study of positivity-preserving discretisations, at least in the context of finite element methods, started with the seminal work \cite{CR73}.  From very early on it was understood that for a linear discretisation to
preserve the Discrete Maximum Principle (DMP), or satisfy the weaker
condition of preserving positivity (or previously set bounds),  the mesh needs to satisfy stringent
conditions (especially if the problem includes convection), and the method needs to be of low order
(at most first order polynomial degree). This motivated the subsequent development of numerous
nonlinear finite element methods which achieve the desired properties by adding appropriate
stabilising terms to the formulation, or by writing the problem as a variational inequality and add
Lagrange multipliers.  A thorough literature review is out of
the scope of this work, and we will only cite the monographs \cite{BJK25} for elliptic, and \cite{KH23} for 
hyperbolic partial differential equations.  
 
 Despite the vast amount work devoted to proposing and analysing nonlinear schemes that
respect the DMP/satisfy bounds, there are two
topics that, up to the best of our knowledge,  remain open. The first one is the possibility of localising the
error estimates (in the spirit of \cite{JSW87}). The second one
is the possibility of proving optimal order error estimates in the $L^2(\Omega)$-norm. 
This work is devoted to the latter of these open problems.  The nonlinearity of a typical
positivity-preserving discretisation makes the use of the Aubin-Nitsche Lemma more involved,
as the concept of an adjoint problem becomes somewhat more problematic. In this work,
we follow an approach described in~\cite[Section 3.3.4]{vanderzee2009}
that consists in proposing a linearised adjoint problem.
We apply this idea to  the positivity-preserving scheme recently proposed in
\cite{BGPV24} that, instead of  adding a nonlinear stabilising term to the formulation,  seeks directly
for a finite element function that belongs to the \textit{admissible set} of finite element functions that
satisfy the prescribed bounds at their degrees of freedom.  The analysis of this method has usually been carried out
by rewriting the method as a variational inequality.   This last point also makes this result interesting, as 
whether the Aubin-Nitsche can be extended to variational inequalities also seems to be an open question.  

Due to technical difficulties, we carry out the analysis for the one-dimensional case and the piecewise
linear finite element space.  This work can be perceived then as the first step towards developing a full
theory of optimal order $L^2$-error estimates for nonlinear, positivity-preserving discretisations.  The rest
of the manuscript is organised as follows: we present the model problem and the finite element method
used in Section~\ref{Sec:Prelim}, and the linearised adjoint problem and the main result of this
work are presented in Section~\ref{Sec:main}.

\section{The model problem and preliminaries}\label{Sec:Prelim}

Throughout this note $\Omega \subset \mathbb R$ denotes an open interval.
We will adopt standard notation for Sobolev spaces, aligned with, e.g., \cite{Brezis-11}. For example, 
$L^2(\Omega)$ and $H^1_0(\Omega)$ are the standard Lebesgue and Sobolev
spaces, and if $v,w \in L^2(\Omega)$ and $\gamma>0$, 
\begin{equation*}
(v,w)_\Omega := \int_\Omega vw\,\textrm{d}x\quad\textrm{and}\quad\|v\|_\Omega \eq \left\{\int_\Omega |v|^2\textrm{d}x\right\}^{1/2}\quad\textrm{and}\quad \|v\|_{\gamma,\Omega} := \gamma^{1/2}\|v\|_\Omega\,,
\end{equation*}
denote the inner product, and standard and weighted  norms in $L^2(\Omega)$, respectively.  

\subsection{The model problem} 
Let $f \in L^2(\Omega)$,   $\mu > 0$ and $\beta \in\bR$.  The model problem reads as follows:
find $u \in H^1_0(\Omega)$ such that
\begin{equation}\label{Eq:Model}
a(u,v) = (f,v)_\Omega \quad \forall v \in H^1_0(\Omega).
\end{equation}
Here,   the bilinear form $a(\cdot,\cdot)$ is given by
\begin{equation*}
a(\phi,v) \eq \mu(\phi,v)_\Omega + \beta(\phi',v)_\Omega+ (\phi',v')_\Omega.
\end{equation*}
This problem has a unique solution thanks to Lax-Milgram's Lemma (see, e.g., \cite[Lemma~25.2]{EG21-II}).
The assumption that $\mu>0$ has been done for simplicity. It is possible to extend the results proven
in Section~\ref{Sec:main} to the case $\mu=0$ without any essential difficulty.

If the right-hand side $f$ is non-negative in $\Omega$, then thanks to the maximum principle (see,
e.g.,  \cite{renardy2006}), we can  make the following assumption on the solution of \eqref{Eq:Model}: 
\begin{equation}\label{A1}
u(x) \in [0,\infty) \qquad \forall x \in \Omega\,.
\end{equation}
In addition, this assumption can be interpreted  pointwise due  to the injection
$H^1_0(\Omega) \hookrightarrow C^0(\overline{\Omega})$ in one space dimension (see, e.g., \cite{Brezis-11}).

\subsection{The discrete setting}

We consider a mesh $\CT_h$ of $\Omega$.  We denote by $h_K\eq \textrm{diam}(K)$, and $h:=\max\{h_K:K\in
\CT_h\}$. To avoid technical complications, we will assume that the mesh family $\{\CT_h\}_{h>0}$ is quasi-uniform. Associated to each mesh in the family, we denote by 
\begin{equation*}
H^2(\CT_h) :=\, \{v \in L^2(\Omega):v|_K \in H^2(K)\,\textrm{ for all}\,K \in \CT_h\}\,,\\
\end{equation*}
with seminorm
\begin{equation*}
\|v''\|_{\CT_h}^2 \eq\, \sum_{K \in \CT_h} \|(v|_K)''\|_K^2\,.
\end{equation*}

Moreover, we introduce the piecewise linear finite element space
\begin{equation*}
V_h \eq \left \{
v_h \in H^1_0(\Omega) \; | \; v_h|_K \in \CP_1(K) \; \forall K \in \CT_h
\right \}.
\end{equation*}
The standard Lagrangian basis for $V_h$, that is, the one containing the ``hat'' functions, is denoted
by $\{\Psi_j\}_{j=1}^N$.
Using this basis,  the  Lagrange interpolant $I_h: H^1_0(\Omega) \to V_h$ is defined by
\begin{equation*}
I_h v \eq \sum_{j=1}^N v(x_j) \Psi_j.
\end{equation*}
The Lagrange interpolant is an algebraic projection onto $V_h$, and it satisfies the
following approximation property (see, e.g., \cite{EG21-I}): there exists $C>0$, independent of $h$, such that
\begin{equation}
\label{eq_interpolation_error}
h^{-1}\|v-I_h v\|_\Omega+\|(v-I_h v)'\|_\Omega
\leq
C h \|v''\|_{\CT_h}\qquad\forall\, v \in H^1_0(\Omega) \cap H^2(\CT_h)\,.
\end{equation}

\subsection{The finite element scheme} 
We start defining, for all $v_h \in V_h$,  the \textit{constrained} $v_h^+$ and \textit{complementary}
parts as 
\begin{equation*}
v_h^+ \eq \sum_{j=1}^N v_h(x_j)^+ \Psi_j \in V_h\quad\textrm{and}\quad v_h^-=v_h-v_h^+\,,
\end{equation*}
where  $v_h(x_j)^+:=\max\{0, v_h(x_j)\}$.

With the above notations,  the finite element method proposed in \cite{BGPV24}  reads as follows: find $u_h \in V_h$ such that
\begin{equation}\label{FEM:BPM}
a(u_h^+,v_h) + s_h(u_h^-,v_h) = (f,v_h)_\Omega\qquad\forall\, v_h \in V_h\,.
\end{equation}
Here, the \textrm{stabilising} bilinear form $s(\cdot,\cdot)$ is given by
\begin{equation*}
s_h(\phi,v) \eq \alpha_h \sum_{j=1}^N \phi(x_j)v(x_j)\quad\textrm{where}\quad \alpha_h \eq \mu h + \beta + h^{-1}\,.
\end{equation*}

\begin{Remark}
Since the problem we are analysing in this work includes convection, then we should refer to \cite{ABP24} for
its proposal and analysis.  Although, the techniques used to analyse the convection-diffusion problem
differ from those used in \cite{BGPV24}, they share the main arguments used herein.
\end{Remark}

\section{The $L^2(\Omega)$-error estimate}\label{Sec:main}

As it was mentioned in the introduction, the method \eqref{FEM:BPM} was analysed in \cite{BGPV24}, where
optimal convergence is proven in the ``energy'' norm, but no optimal order error estimate in the
$L^2(\Omega)$-norm was proven, even if it has been observed in the numerical experiments.
The first step towards the proof  is to introduce the following weights:
for $1 \leq j \leq N$,  define
\begin{equation}\label{Eq:Weights}
\omega_j \eq -\frac{u_h(x_j)^-}{u(x_j)-u_h(x_j)^+}, 
\end{equation}
with the convention that ``$0/0 = 0$''.

\begin{lemma}[Weights positivity]
For $1 \leq j \leq N$, either $u(x_j) = u_h(x_j)^+ = 0$, or, we have $0 \leq \omega_j < +\infty$.
\end{lemma}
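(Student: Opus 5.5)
The argument is a case analysis on the sign of the nodal value $u_h(x_j)$. It uses only the definitions of $v_h^\pm$, the convention $0/0=0$, and assumption \eqref{A1}. Since $u_h^+$ and $u_h^-$ are both expanded in the hat basis, their nodal values are $u_h(x_j)^+ = \max\{0,u_h(x_j)\}$ and $u_h(x_j)^- = u_h(x_j) - u_h(x_j)^+ = \min\{0,u_h(x_j)\}$. In particular $u_h(x_j)^-\le 0$, so the numerator $-u_h(x_j)^-$ in \eqref{Eq:Weights} is always non-negative. At most one of $u_h(x_j)^+$ and $u_h(x_j)^-$ is nonzero.

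\emph{Case 1: $u_h(x_j)\ge 0$.} Then $u_h(x_j)^-=0$ and the numerator vanishes. If the denominator $u(x_j)-u_h(x_j)^+$ is nonzero, then $\omega_j=0$. If it vanishes, the convention $0/0=0$ again gives $\omega_j=0$. Either way $0\le\omega_j<+\infty$.

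\emph{Case 2: $u_h(x_j)<0$.} Then $u_h(x_j)^+=0$, so the denominator is $u(x_j)$, which is non-negative by \eqref{A1}, and the numerator $-u_h(x_j)^-=-u_h(x_j)$ is strictly positive. If $u(x_j)>0$, then $\omega_j = -u_h(x_j)/u(x_j)$ is a positive finite number. If instead $u(x_j)=0$, then $u(x_j)=u_h(x_j)^+=0$, which is exactly the first alternative in the statement. In that situation $\omega_j$ would be ``positive over zero'' and is not claimed to be finite.

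There is no real obstacle here. The only point requiring care is the bookkeeping of the degenerate denominators: a zero denominator paired with a zero numerator is absorbed by the convention (Case 1), while a zero denominator paired with a nonzero numerator is precisely the excluded alternative (Case 2 with $u(x_j)=0$). Assumption \eqref{A1}, which is valid pointwise thanks to the embedding $H^1_0(\Omega)\hookrightarrow C^0(\overline\Omega)$, is what rules out a negative denominator and hence a negative weight.
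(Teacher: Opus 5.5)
Your proof is correct and follows essentially the same route as the paper. Both split into two cases according to whether the nodal value $u_h(x_j)^-$ vanishes (equivalently, whether $u_h(x_j)\ge 0$), using the $0/0$ convention in the first case and \eqref{A1} together with $u_h(x_j)^+=0$ in the second; you are only more explicit about why $u_h(x_j)^-<0$ forces $u_h(x_j)^+=0$, which the paper uses tacitly.
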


\begin{proof}
Fix $j \in \{1,\dots,N\}$. If $u_h(x_j)^- = 0$, then $\omega_j = 0$ and there is nothing to check.
We can therefore assume that $u_h(x_j)^- < 0$.  If $u(x_j) = u_h(x_j)^+ = 0$, there is nothing to check.  Since $u(\cdot)$ is non-negative
we can therefore assume that $u(x_j) > 0$.  Then,  $u(x_j) > u_h(x_j)^+ = 0$, and so $0<\omega_j=-u_h(x_j)^-/u(x_j)<\infty$, which proves the claim.
\end{proof}

\subsection{The adjoint problem}
In order  to define the adjoint problem that will be at the core of the proof
of our main result we define the following set:
\begin{equation*}
J_0:=\{ j\in\{1,\ldots,N\}: \omega_j = \infty\}=\{ j\in\{1,\ldots,N\}: u(x_j) = u_h(x_j)^+ = 0 \;\textrm{and}\;u_h(x_j)^- < 0 \}\,.
\end{equation*}
 Note that in practice the indices are contained in $J_0$ are not known, but this is not an issue since
this set is only used as an abstract tool to be used in the proof of the error estimate.
We also denote by $\{I^\ell\}_{\ell=1}^L$ the partition of $\Omega$ into (open)
intervals induced by the nodes $x_j$ with $j \in J_0$. Finally, we let
\begin{equation*}
\LH_0
=
\left \{
v \in H^1_0(\Omega)\; | \; v(x_j) = 0 \; \forall j \in J_0
\right \}.
\end{equation*}

In addition, we introduce the following bilinear form 
\begin{equation*}
\ts_h(\phi,v)
\eq
\alpha_h \sum_{j=1}^N \omega_j \phi(x_j)v(x_j)\quad \textrm{for}\; \phi,v \in \LH_0.
\end{equation*}
Observe that this bilinear form is well-defined since $\phi(x_j) = v(x_j) = 0$
whenever $\omega_j = +\infty$, and since the remaining weights are
all positive, $\ts_h$ is a continuous bilinear form over $\LH_0$, and
we have
\begin{equation}
\label{eq_tsh_positive}
\ts_h(\phi,\phi) \geq 0
\end{equation}
for all $\phi \in \LH_0$.  Moreover, the following property will be instrumental in the proof of the error estimate:
\begin{equation}
\label{eq_tsh_sh}
\ts_h(u-u_h^+,v) = -s_h(u_h^-,v)
\end{equation}
for all $v \in \LH_0$.

\begin{lemma}[Adjoint problem]
\label{lemma_theta}
Let $\psi \in L^2(\Omega)$. Then, the problem: find $\theta \in \LH_0$ such that
\begin{equation}
\label{eq_definition_theta}
a(w,\theta) + \ts_h(w,\theta) = \mu (w,\psi)_\Omega\qquad\forall\, w \in \LH_0\,,
\end{equation}
has a unique solution. In addition, it satisfies
\begin{equation} \label{eq_theta_H1}
\|\theta\|_{\mu,\Omega} \leq \|\psi\|_{\mu,\Omega}
\quad\textrm{and}\quad
\|\theta'\|_{\Omega} \leq\|\psi\|_{\mu,\Omega}.
\end{equation}
\end{lemma}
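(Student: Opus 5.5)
The plan is to apply the Lax--Milgram Lemma on the closed subspace $\LH_0 \subset H^1_0(\Omega)$, and then derive the two bounds by testing \eqref{eq_definition_theta} with $w=\theta$.

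First, $\LH_0$ is a closed subspace of $H^1_0(\Omega)$. Indeed, it is the intersection of the kernels of the point evaluations $v \mapsto v(x_j)$ for $j\in J_0$, and these are continuous on $H^1_0(\Omega)$ thanks to the embedding $H^1_0(\Omega)\hookrightarrow C^0(\overline\Omega)$. Hence $\LH_0$ is a Hilbert space with the $H^1$ inner product.

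Next, the bilinear form $b(w,v) := a(w,v)+\ts_h(w,v)$ is continuous on $\LH_0$. For $a$ this is standard. For $\ts_h$, only finitely many nodes enter, the weights $\omega_j$ with $j\notin J_0$ are finite, and point values are bounded by the $H^1$ norm. The right-hand side $w\mapsto \mu(w,\psi)_\Omega$ is a bounded linear functional.

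For coercivity, I would integrate the convection term by parts for $w\in H^1_0(\Omega)$:
\begin{equation*}
\beta(w',w)_\Omega=\frac{\beta}{2}\int_\Omega (w^2)'\,\textrm{d}x=0.
\end{equation*}
This gives $a(w,w)=\mu\|w\|_\Omega^2+\|w'\|_\Omega^2$. Combined with \eqref{eq_tsh_positive},
\begin{equation*}
b(w,w)\ge \|w\|_{\mu,\Omega}^2+\|w'\|_\Omega^2,
\end{equation*}
which is coercive in $H^1$ since $\mu>0$. Lax--Milgram then yields existence and uniqueness of $\theta$.

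For the estimates, take $w=\theta$ in \eqref{eq_definition_theta}. Using the identity above, \eqref{eq_tsh_positive} and Cauchy--Schwarz in the weighted norm,
\begin{equation*}
\|\theta\|_{\mu,\Omega}^2+\|\theta'\|_\Omega^2 \le \mu(\theta,\psi)_\Omega \le \|\theta\|_{\mu,\Omega}\|\psi\|_{\mu,\Omega}.
\end{equation*}
Dropping the second term on the left and dividing by $\|\theta\|_{\mu,\Omega}$ gives $\|\theta\|_{\mu,\Omega}\le\|\psi\|_{\mu,\Omega}$. Substituting this back, $\|\theta'\|_\Omega^2\le \|\psi\|_{\mu,\Omega}^2$, which is the second bound.

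There is no genuine obstacle. The only points needing care are:
\begin{itemize}
\item the vanishing of the convection term, which uses the homogeneous boundary conditions and holds with no sign condition on $\beta$;
\item the well-definedness and nonnegativity of $\ts_h$ on $\LH_0$, which follow from the Weights positivity lemma and the definition of $J_0$. In particular, since $\omega_j\ge 0$, the constant $\alpha_h$ causes no trouble in the sign argument even though $\beta$ may be negative, because the quasi-uniform mesh gives $\alpha_h>0$ for $h$ small.
\end{itemize}
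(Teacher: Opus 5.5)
Your proof is correct and follows the paper's argument essentially verbatim: Lax--Milgram on $\LH_0$ using continuity of point evaluations and the nonnegativity \eqref{eq_tsh_positive} of $\ts_h$, then testing with $w=\theta$ and using $a(\theta,\theta)=\|\theta\|_{\mu,\Omega}^2+\|\theta'\|_\Omega^2$, which the paper uses implicitly and you justify through $\beta(w',w)_\Omega=0$. Your side remark that \eqref{eq_tsh_positive} needs $\alpha_h>0$ when $\beta<0$ is a fair point that the paper glosses over. However, it follows simply from $h<1/|\beta|$, since $\alpha_h=h^{-1}(1+\beta h+\mu h^2)$, and not from quasi-uniformity.
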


\begin{proof} 
Due to the injection $H^1_0(\Omega) \hookrightarrow C^0(\overline{\Omega})$
and the fact the functions in $\LH_0$ vanish at the nodes $x_j$ with infinite
weights $\omega_j$ (i.e. $j \in J_0$), it is clear that $\ts_h$ is a continunous
bilinear form over $\LH_0$. We also noted in \eqref{eq_tsh_positive} that $\ts_h$
is positive semi-definite. It follows that the bilinear form
\begin{equation*}
\phi,v \in \LH_0 \mapsto \mathbb R \ni a(\phi,v) + \ts_h(\phi,v)
\end{equation*}
is continuous and elliptic, and the existence and uniqueness of $\theta \in \LH_0$
satisfying \eqref{eq_definition_theta} for any $\psi \in L^2(\Omega)$ follows from the Lax-Milgram Lemma. 

To prove the first inequality in \eqref{eq_theta_H1}, we write that
\begin{equation*}
\|\theta\|_{\mu,\Omega}^2 \leq a(\theta,\theta) \leq a(\theta,\theta) + \ts_h(\theta,\theta)
=
(\mu \theta,\psi)_\Omega,
\end{equation*}
and the estimate follows using the Cauchy-Schwarz inequality. For the second part of \eqref{eq_theta_H1}
we similarly write that
\begin{equation*}
\|\theta'\|_\Omega^2 \leq a(\theta,\theta) \leq (\mu\theta,\psi)_\Omega \leq \|\psi\|_{\mu,\Omega}^2\,,
\end{equation*}
where we used the first part in the last inequality.
\end{proof}

\subsection{The discrete Green's function}

One further ingredient that is needed to prove the $L^2(\Omega)$-error estimate is the \textit{discrete
Green's function}.  At each node $x_j$ of the mesh, with $j \notin J_0$, this  is defined as the only element 
$\phi_{j,h} \in V_h \cap \LH_0$ that satisfies
\begin{equation}\label{Eq:discrete_green}
(w',\phi_{j,h}')_\Omega = w(x_j)\qquad \forall\, w \in \LH_0\,.
\end{equation}

The solution, $\theta$, to the adjoint problem \eqref{eq_definition_theta} cannot, in general,   be proven 
to belong to $H^2(\Omega)$. Nevertheless, with
the help of the Green's function just defined a decomposition of $\theta$ in terms of a sum of finite element functions
and a regular function can be proven. This is done in the next result.

\begin{lemma}[Regularity by decomposition]
\label{lemma_theta_H2}
Consider $\psi \in L^2(\Omega)$ and let $\theta \in \LH_0$ solve \eqref{eq_definition_theta}.
Then, the following decomposition holds: $\theta = \theta_2+\theta_h$, where $\theta_h$ is a linear
combination of discrete Green's functions, and  $\theta_2 \in H^1_0(\Omega)$ satisfying $\theta_2 \in H^2(I_\ell)$ for $\ell=1,\ldots,L$.  In addition, 
$\theta_2$ satisfies
\begin{equation}
\label{eq_theta_H2}
\|\theta_2''\|_{\CT_h}
\leq
(2\,\mu^{1/2}+\beta) \|\psi\|_{\mu,\Omega}.
\end{equation}
\end{lemma}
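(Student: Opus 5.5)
The plan is to read off the strong form of \eqref{eq_definition_theta} and to absorb the point-mass part coming from $\ts_h$ into a combination of discrete Green's functions. For every $j\notin J_0$ the weight $\omega_j$ is finite by the weights-positivity lemma, so the coefficients
\begin{equation*}
c_j \eq -\alpha_h\,\omega_j\,\theta(x_j), \qquad j\notin J_0,
\end{equation*}
are well defined. I set $\theta_h \eq \sum_{j\notin J_0} c_j\,\phi_{j,h}\in V_h\cap\LH_0$ and $\theta_2\eq\theta-\theta_h$. Since $\theta\in\LH_0\subset H^1_0(\Omega)$ and $\theta_h\in V_h\subset H^1_0(\Omega)$, we get $\theta_2\in H^1_0(\Omega)$.

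By the defining property \eqref{Eq:discrete_green}, for every $w\in\LH_0$ we have $(w',\theta_h')_\Omega=\sum_{j\notin J_0}c_j\,w(x_j)=-\ts_h(w,\theta)$. The terms with $j\in J_0$ do not enter, because $w(x_j)=\theta(x_j)=0$ there. Substituting this into \eqref{eq_definition_theta} and expanding $a(w,\theta)$ gives
\begin{equation*}
(w',\theta_2')_\Omega = \mu(w,\psi-\theta)_\Omega - \beta(w',\theta)_\Omega \qquad \forall\, w\in\LH_0 .
\end{equation*}

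Next, I fix $\ell$ and take $w\in C_c^\infty(I^\ell)$, extended by zero. Such a $w$ vanishes at every node $x_j$ with $j\in J_0$, since these nodes are endpoints of the intervals $I^\ell$, so $w\in\LH_0$. Integrating by parts in the $\beta$-term, $-\beta(w',\theta)=\beta(w,\theta')$, and the identity above says that, in the sense of distributions on $I^\ell$,
\begin{equation*}
\theta_2'' = \mu(\theta-\psi) - \beta\,\theta' .
\end{equation*}
The right-hand side lies in $L^2(I^\ell)$, so $\theta_2\in H^2(I^\ell)$ for each $\ell$. Each element $K\in\CT_h$ is contained in some $I^\ell$, because the $I^\ell$ are delimited by mesh nodes. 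Hence $\theta_2\in H^2(\CT_h)$, and summing over $\ell$ gives $\|\theta_2''\|_{\CT_h}=\|\mu(\theta-\psi)-\beta\theta'\|_\Omega$.

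Finally, the triangle inequality together with \eqref{eq_theta_H1} from Lemma~\ref{lemma_theta} yields
\begin{equation*}
\|\theta_2''\|_{\CT_h}\le \mu^{1/2}\|\theta\|_{\mu,\Omega}+\mu^{1/2}\|\psi\|_{\mu,\Omega}+|\beta|\,\|\theta'\|_\Omega\le (2\mu^{1/2}+|\beta|)\|\psi\|_{\mu,\Omega}.
\end{equation*}
This is \eqref{eq_theta_H2}, reading $\beta$ there as $|\beta|$ or assuming $\beta\ge0$.

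The main point requiring care is the test-function step. The point masses from $\ts_h$ at the nodes with $j\notin J_0$ must be fully cancelled by $\theta_h$, and this is why the discrete Green's functions are defined by testing against all of $\LH_0$ rather than only $V_h$. The nodes in $J_0$, where $\theta'$ may also jump, are only handled by restricting the regularity claim to each $I^\ell$. Once these two points are checked, the remaining estimates are routine.
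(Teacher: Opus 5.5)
Your proposal is correct and follows the paper's proof step by step: you absorb the point masses of $\ts_h$ into a combination of discrete Green's functions, read off the strong form of the remaining equation on each $I^\ell$, and conclude with the triangle inequality and \eqref{eq_theta_H1}. The explicit coefficients $c_j$, the $C_c^\infty(I^\ell)$ test functions and the $|\beta|$ caveat only spell out what the paper leaves implicit.
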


\begin{proof}
We start rewriting \eqref{eq_definition_theta} as
\begin{equation*}
(w',\theta')_\Omega = \mu(w,\psi)_\Omega-\mu(w,\theta)_\Omega-\beta (w',\theta)_\Omega-\ts_h(w,\theta)\qquad \forall\, w \in \LH_0\,.
\end{equation*}
 Applying  \eqref{Eq:discrete_green} at each node $j\notin J_0$, we conclude that
there exists $\theta_h \in V_h$ such that
\begin{equation*}
(w',\theta_h')_\Omega = -\ts_h(w,\theta) = -\alpha_h \sum_{j=1}^N \omega_j w(x_j) \theta(x_j)\qquad\forall\, w \in \LH_0\,.
\end{equation*}
We then introduce $\theta_2 \eq \theta-\theta_h \in \LH_0$ and observe that
\begin{equation*}
(w',\theta_2')_\Omega = (w,\mu \psi-\mu\theta+\beta \theta')_\Omega\qquad\forall\, w \in \LH_0\,.
\end{equation*}
It follows that $\theta_2 \in H^2(I^\ell)$ for
each $1 \leq \ell \leq L$ with
\begin{equation}
\label{tmp_theta_2}
(\theta_2|_{I^\ell})'' = - \left (\mu\psi-\mu\theta+\beta\theta'\right )|_{I^\ell}.
\end{equation}
Since each $K \in \CT_h$ is entirely included in some $I^\ell$, it follows in particular
that $\theta_2 \in H^2(\CT_h)$ with~\eqref{tmp_theta_2} holding on each $K \in \CT_h$.
As a consequence, we have
\begin{equation*}
\|\theta_2''\|_{\CT_h}
\leq
\mu^{1/2}(\|\psi\|_{\mu,\Omega}+\|\theta\|_{\mu,\Omega}) + \beta\|\theta'\|_\Omega,
\end{equation*}
and thus \eqref{eq_theta_H2} follows from the estimates in~\eqref{eq_theta_H1}.
\end{proof}

\subsection{The Aubin-Nitsche Lemma}
In this section we prove the main result of the present work. 

\begin{theorem}[Aubin-Nitsche trick]
There exists a constant $C>0$, independent of $h$ and any physical parameter, such that
\begin{equation}
\label{eq_aubin_nitsche}
\|u-u_h^+\|_{\mu,\Omega}
\leq
C \left (
\mu^{1/2}+\beta
\right )
h
\left (
\mu^{1/2} h \|u-u_h^+\|_{\mu,\Omega}
+
(1+\beta h)\|(u-u_h^+)'\|_{\Omega}
\right )\,.
\end{equation}
\end{theorem}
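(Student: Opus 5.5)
We take $\psi \eq e \eq u-u_h^+$ in Lemma~\ref{lemma_theta} and test \eqref{eq_definition_theta} with $w=e$. This is allowed because $e\in\LH_0$: for $j\in J_0$ we have $u(x_j)=u_h(x_j)^+=0$. The identity then reads
\begin{equation*}
\|e\|_{\mu,\Omega}^2 = a(e,\theta)+\ts_h(e,\theta).
\end{equation*}
By \eqref{eq_tsh_sh}, $\ts_h(e,\theta)=-s_h(u_h^-,\theta)$. We next need Galerkin orthogonality. For any $v_h\in V_h$, subtracting \eqref{FEM:BPM} from \eqref{Eq:Model} gives $a(e,v_h)=s_h(u_h^-,v_h)$. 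Combining the two,
\begin{equation*}
\|e\|_{\mu,\Omega}^2 = a(e,\theta-v_h) - s_h(u_h^-,\theta-v_h)\qquad\forall v_h\in V_h.
\end{equation*}
The natural choice is $v_h = I_h\theta_2+\theta_h$, using the decomposition of Lemma~\ref{lemma_theta_H2}. Since $\theta_h$ is a combination of discrete Green's functions in $V_h\cap\LH_0$, this gives $\theta - v_h = \theta_2 - I_h\theta_2$. This difference vanishes at every node, so the stabilisation term $s_h(u_h^-,\theta-v_h)$ drops out entirely. We are left with $\|e\|_{\mu,\Omega}^2 = a(e,\theta_2-I_h\theta_2)$.

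Now we bound the three terms of $a$ by Cauchy--Schwarz and \eqref{eq_interpolation_error}, applied elementwise; this is legitimate because $\theta_2\in H^2(\CT_h)\cap H^1_0(\Omega)$. The three terms are controlled as follows:
\begin{equation*}
\mu(e,\theta_2-I_h\theta_2)_\Omega \le C\mu^{1/2}h^2\|e\|_{\mu,\Omega}\|\theta_2''\|_{\CT_h},
\end{equation*}
\begin{equation*}
\beta(e',\theta_2-I_h\theta_2)_\Omega\le C\beta h^2\|e'\|_\Omega\|\theta_2''\|_{\CT_h},
\qquad
(e',(\theta_2-I_h\theta_2)')_\Omega\le Ch\|e'\|_\Omega\|\theta_2''\|_{\CT_h}.
\end{equation*}
Inserting \eqref{eq_theta_H2} with $\psi=e$, i.e.\ $\|\theta_2''\|_{\CT_h}\le (2\mu^{1/2}+\beta)\|e\|_{\mu,\Omega}$, and dividing by $\|e\|_{\mu,\Omega}$ yields \eqref{eq_aubin_nitsche}.

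The delicate points are the cancellation of the nonlinear stabilisation and the admissibility of the test functions. We must check that $I_h\theta_2+\theta_h\in V_h$ is a legitimate test function; this holds because orthogonality is available for all of $V_h$. We must also check that $e\in\LH_0$, so that \eqref{eq_tsh_sh} and \eqref{eq_definition_theta} can be used with $w=e$. A further subtlety is that \eqref{eq_interpolation_error} only requires piecewise $H^2$ regularity. This is exactly what Lemma~\ref{lemma_theta_H2} provides, since the kinks of $\theta_2$ sit only at the nodes $x_j$, $j\in J_0$, which are mesh nodes.
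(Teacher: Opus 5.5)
Your proposal is correct and follows the paper's proof essentially step by step. Your test function $v_h = I_h\theta_2+\theta_h$ is exactly the paper's choice $I_h\xi$, because $I_h$ reproduces $\theta_h\in V_h$; from there, the nodal cancellation of $s_h(u_h^-,\cdot)$, the reduction to $a(u-u_h^+,\theta_2-I_h\theta_2)$, and the elementwise interpolation bounds combined with Lemma~\ref{lemma_theta_H2} are the same as in the paper.
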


\begin{proof}
Let $\xi$ be the only element of $\LH_0$ such that
\begin{equation}\label{Adjoint-Final}
a(w,\xi)+\ts_h(w,\xi) = \mu(w,u-u_h^+)_\Omega\qquad w \in \LH_0\,.
\end{equation}
As shown in Lemma \ref{lemma_theta},
$\xi$ is  well-defined, and Lemma \ref{lemma_theta_H2} shows
that $\xi = \xi_h+\xi_2$ with $\xi_h \in V_h$ and $\xi_2 \in H^1_0(\Omega) \cap H^2(\CT_h)$
satisfying
\begin{equation}
\label{tmp_xi_H2}
\|\xi_2''\|_{\CT_h}
\leq
(2\,\mu^{1/2}+\beta)\|u-u_h^+\|_{\mu,\Omega}.
\end{equation}

Since $u-u_h^+ \in \LH_0$,  then it can be used as test function in \eqref{Adjoint-Final},  which, together with
 \eqref{eq_tsh_sh}  yield
\begin{equation*}
\|u-u_h^+\|_{\mu,\Omega}^2
=
a(u-u_h^+,\xi)+\ts_h(u-u_h^+,\xi)
=
a(u,\xi)-\{a(u_h^+,\xi)+s_h(u_h^-,\xi)\}\,.
\end{equation*}
Since $u_h$ solves \eqref{FEM:BPM} and $u$ solves \eqref{Eq:Model}, then 
the Galerkin orthogonality follows  in the sense that
\begin{equation*}
a(u,v_h)= (f,v_h)_\Omega=a(u_h^+,v_h)+s(u_h^-,v_h)\qquad\forall\, v_h\in V_h\,,
\end{equation*}
and so
\begin{equation*}
\|u-u_h^+\|_{\mu,\Omega}^2
=
a(u,\xi-I_h\xi)-\{a(u_h^+,\xi-I_h\xi)+s_h(u_h^-,\xi-I_h\xi)\}.
\end{equation*}
The next step is to observe that $(\xi-I_h\xi)(x_j) = 0$
for all $j \in \{1,\dots,N\}$, so that actually
\begin{equation*}
\|u-u_h^+\|_{\mu,\Omega}^2
=
a(u-u_h^+,\xi-I_h\xi).
\end{equation*}
We conclude the proof using Cauchy-Schwarz inequality, the fact that $\xi-I_h\xi = \xi_2-I_h\xi_2$
(since $I_h\xi_h=\xi_h$),  and the interpolation estimate \eqref{eq_interpolation_error} to get to
\begin{align*}
a(u-u_h^+,\xi-I_h\xi)
\leq&\, 
\|u-u_h^+\|_{\mu,\Omega}\|\xi-I_h\xi\|_{\mu,\Omega}
+
\beta\|(u-u_h^+)'\|_{\Omega}\|\xi-I_h\xi\|_{\Omega}\\
&\quad + \|(u-u_h^+)'\|_{\Omega}\|(\xi-I_h\xi)'\|_{\Omega}
\\
\leq&\, 
C
\left (
\mu^{1/2} h^2 \|u-u_h^+\|_{\mu,\Omega}
+
h(1+\beta h)\|(u-u_h^+)'\|_{\Omega}
\right )
\|\xi_2''\|_{\CT_h}\,,
\end{align*}
and \eqref{eq_aubin_nitsche} follows from \eqref{tmp_xi_H2}.
\end{proof}


\bibliographystyle{alpha}
\bibliography{refs.bib}

%
%

\end{document}

%% file: general_commands.tex
\usepackage{amssymb}
\usepackage{mathrsfs}

\newcommand{\eq}{:=}

\newcommand{\CP}{\mathcal P}

\newcommand{\CT}{\mathcal T}

\newcommand{\LH}{\mathscr H}

